\documentclass[11pt]{article}
\usepackage[T1]{fontenc}
\usepackage[utf8]{inputenc}
\usepackage[margin=1in]{geometry}
\usepackage{amsmath,amsthm,amssymb,mathtools}
\usepackage{newtxtext,newtxmath}
\usepackage{microtype}
\usepackage{booktabs,array}
\usepackage{aliascnt}
\usepackage[authoryear,round]{natbib}

\usepackage{enumitem}
\usepackage[hidelinks]{hyperref}
\usepackage[capitalize,noabbrev]{cleveref}
\hypersetup{pdftitle={The pros and cons of Cover's randomization for testing by betting},
  pdfsubject={Bell--Cover competitive optimality, numeraire e-variables, and sequential testing},
  pdfkeywords={e-value, e-process, numeraire, log-optimality, randomization, testing by betting}}
\setlist[enumerate]{itemsep=0.15em,topsep=0.3em}
\allowdisplaybreaks[1]
\numberwithin{equation}{section}
\newtheorem{theorem}{Theorem}[section]
\newaliascnt{proposition}{theorem}
\newtheorem{proposition}[proposition]{Proposition}
\aliascntresetthe{proposition}
\crefname{proposition}{Proposition}{Propositions}
\Crefname{proposition}{Proposition}{Propositions}
\newaliascnt{lemma}{theorem}
\newtheorem{lemma}[lemma]{Lemma}
\aliascntresetthe{lemma}
\crefname{lemma}{Lemma}{Lemmas}
\Crefname{lemma}{Lemma}{Lemmas}
\newaliascnt{corollary}{theorem}
\newtheorem{corollary}[corollary]{Corollary}
\aliascntresetthe{corollary}
\crefname{corollary}{Corollary}{Corollarys}
\Crefname{corollary}{Corollary}{Corollarys}
\theoremstyle{definition}
\newaliascnt{definition}{theorem}
\newtheorem{definition}[definition]{Definition}
\aliascntresetthe{definition}
\crefname{definition}{Definition}{Definitions}
\Crefname{definition}{Definition}{Definitions}
\newaliascnt{example}{theorem}
\newtheorem{example}[example]{Example}
\aliascntresetthe{example}
\crefname{example}{Example}{Examples}
\Crefname{example}{Example}{Examples}
\theoremstyle{remark}
\newaliascnt{remark}{theorem}
\newtheorem{remark}[remark]{Remark}
\aliascntresetthe{remark}
\crefname{remark}{Remark}{Remarks}
\Crefname{remark}{Remark}{Remarks}
\newcommand{\E}{\mathbb E}
\newcommand{\Pcal}{\mathcal P}
\newcommand{\Ecal}{\mathcal E}
\newcommand{\Fcal}{\mathcal F}
\newcommand{\Ccal}{\mathcal C}
\newcommand{\ind}{\mathbf 1}
\newcommand{\Unif}{\operatorname{Unif}}

\newcommand{\pos}[1]{\left(#1\right)_{+}}
\newcommand{\num}{E^{\star}}
\newcommand{\comp}{\mathrm{C}}
\newcommand{\calcomp}{\mathrm{C,cal}}
\newcommand{\umi}{\mathrm{UM}}
\newcommand{\ord}{\mathrm{ord}}
\newcommand{\doi}[1]{doi:\,\href{https://doi.org/#1}{\nolinkurl{#1}}}

\title{Competitive optimality in testing by betting\\ via Bell-Cover randomization}
\author{Aaditya Ramdas \\ \texttt{aramdas@stanford.edu}}
\date{\today}

\begin{document}
\maketitle
\vspace{-2.8em}
\begin{abstract}
Bell and Cover showed that an investor who multiplies the initial unit of capital by an independent uniform random variable on $(0,2)$, and then uses the log-optimal portfolio, wins a head-to-head wealth comparison with probability at least one half against every independently randomized competitor. We explain very simply how this result transfers to testing by betting: for any composite null $\mathcal P$ and simple alternative $Q$, denoting $E^*$ as the corresponding numeraire e-variable, we show that $UE^*$ exceeds any other e-variable $E$ with probability at least half.
% ; the essential property is the numeraire inequality $\E_Q[E/\num]\leq1$. 
Interestingly, we show that this competitive optimality result is actually equivalent to the numeraire inequality $\E_Q[E/\num]\leq1$, and in general randomization only helps the numeraire and fails to improve the competitive advantage of an arbitrary e-variable. 
Under optional stopping with or without knowledge of $U$, we emphasize a key distinction between e-process validity and competitive optimality. 
% For finite positive numeraires, the randomization incurs a fixed expected log loss; it neither confers competitive optimality on an arbitrary e-variable nor provides a uniform power advantage. 
We also show that competitive optimality comes at the price of expected log wealth and power: thresholding $UE^*$ at $1/\alpha$ has sharp size at most $\alpha/2$, but the factor of two actually disappears under optional stopping. Even after correcting for this factor of two, the test is dominated in conditional rejection probability by randomizing the testing threshold (randomized Markov's inequality). Thus, Bell-Cover randomization is optimal for a specific competitive objective, at the cost of others.
\end{abstract}
\noindent\textbf{Keywords:} e-variable; e-process; numeraire; log-optimality; competitive optimality; randomized testing; optional stopping.

\section{Introduction}\label{sec:history}
In game-theoretic statistics~\citep{shafer2021testing,ramdas2023game,RamdasWang2025}, a betting-based test typically starts with one unit of capital. A statistician uses this capital to bet against a given null hypothesis; the resulting wealth after one bet is called an e-value, and repeated this across multiple rounds leads to wealth processes (test supermartingales, or more generally for composite nulls, e-processes). The larger the wealth, the more evidence there is against the null.

An initially surprising alternative is to exchange the initial unit wealth for a random initial wealth using a fair \emph{lottery}
\begin{equation}\label{eq:lottery}
 U\sim\Unif(0,2),\qquad \E[U]=1,
\end{equation}
and then follow the same betting strategy starting with initial wealth $U$. Whether this helps depends on the objective. There are three different questions: does the resulting capital remain valid evidence against the null; does it beat another evidence-producing strategy; and does it improve the probability of rejecting the null? These questions have different answers that we will provide in this paper.

The following competitive result is due to \citet[Theorem~1]{BellCover1980}:  an investor who starts with capital $U$ and then uses the log-optimal portfolio, wins a head-to-head wealth comparison with probability at least one half against every admissible independently randomized competitor. We will translate this result to the modern testing by betting context, and show quite simply that that the same conclusion holds for the log-optimal numeraire e-value, which was recently shown by~\cite{LarssonEtAl2025} to always exist. Fascinatingly, we will show that competitive optimality is actually equivalent to the numeraire property.
% It is separate from the universal-portfolio theorem of \citet{Cover1991}, whose comparison is with the best constant-rebalanced portfolio in hindsight and whose central guarantee concerns asymptotic log-wealth along market sequences.

% The relevant history goes substantially beyond one-period portfolios. 
\citet[Theorem~2]{BellCover1988} proved, for a \emph{general convex family} of nonnegative random wealths, the equivalence between relative log-optimality and the expected-ratio inequality that is now called the numeraire property. Their Theorem~3 separates a competitive portfolio game into a fair-lottery problem and a log-optimal allocation problem, their Theorem~4 treats multiple periods, and their Section~6 considers conditional and terminal randomization. 

 \citet{LarssonEtAl2025} were aware of this early work on log-optimality and the numeraire; when introducing Proposition~2.4, they  point to \citet[Theorem~16.2.2]{CoverThomas2006}. The novelty of the more recent work is to establish existence of a numeraire for the \emph{entire class of e-variables} of an arbitrary  composite null, against an arbitrary point alternative, with no domination or finite-information assumptions. 
 Their theory permits infinite values and develops the associated effective-null and reverse-information-projection duality. 
% An equivalence of optimality properties for an existing candidate is not an existence theorem for that statistical class. 

% These results also belong to a broader literature. The process-level numeraire is a standard object in mathematical finance; see, for example, \citet{KaratzasKardaras2007}; the difference in new work is the focus on arbitrary composite nulls without dominating reference measures. Growth-optimal e-testing and reverse information projections were developed in a more limited capacity before the unrestricted numeraire theorem in \citet{GrunwaldEtAl2024} and \citet{LardyEtAl2024}. 

% One final historical point anticipates the conclusion. 
\citet[Section~5, p.~165]{BellCover1980} explicitly distinguish protection in their competitive game from increasing investment capital and decline to advocate the additional lottery in practice. Our paper also emphasizes the distinction between this particular form of game-theoretic competitive optimality and practical statistical benefit, and is therefore consistent with the older interpretations.

\section{Statistical setting and the numeraire}\label{sec:setting}

Let $(\Omega,\Fcal)$ be the underlying measure space, let $\Pcal$ be a nonempty family of probability measures representing the null, and let $Q$ be a specified alternative probability measure. Define
\begin{equation}\label{eq:eclass}
 \Ecal(\Pcal)
 =\{E:\Omega\to[0,\infty]\text{ measurable}:\ \E_P[E]\leq1\text{ for all }P\in\Pcal\},
\end{equation}
which is the set of all e-variables for $\Pcal$; an observed realization is called an e-value. The class is convex and contains the constant one.
% All logarithms in this paper are natural.

\begin{definition}[Numeraire]\label{def:num}
An e-variable $\num$ is called a  numeraire for $\Pcal$ against $Q$ if it is strictly positive $Q$-almost surely and
\begin{equation}\label{eq:num}
 \E_Q\!\left[\frac{E}{\num}\right]\leq1
 \quad\text{for every }E\in\Ecal(\Pcal).
\end{equation}
\end{definition}
% We distinguish ordinary finite ratios from extended-value conventions. 
Unless explicitly stated otherwise, in this paper, competitive comparisons further assume
\begin{equation}\label{eq:finite}
 \num<\infty\qquad Q\text{-almost surely}.
\end{equation}
Then \eqref{eq:num} also forces every competing e-variable to be finite $Q$-almost surely. 
Following~\cite{LarssonEtAl2025}, this is equivalent to assuming that $Q\ll\Pcal$, which means
\[
 \bigl[P(A)=0\text{ for every }P\in\Pcal\bigr]\ \Longrightarrow\ Q(A)=0,
 \qquad A\in\Fcal.
\]
This is much weaker than requiring $Q\ll P_0$ for some particular $P_0\in\Pcal$. All it says is that if some event is impossible under every $P$, then it should be impossible under $Q$; if this condition is not met for some $A$, then clearly one can set the e-variable equal to infinity on that $A$, and pay no price under the null, so this condition only serves to exclude ``trivialities'' when it is obvious that one can reject the null by immediately making an infinite amount of money by betting only a dollar. We summarize one of their results below.
% The following result summarizes two theorems from~\cite{LarssonEtAl2025}.

\begin{theorem}\label{thm:lrr}
For every nonempty $\Pcal$ and every $Q$, a numeraire exists and is unique up to $Q$-null sets. Moreover, the numeraire is finite $Q$-almost surely if and only if $Q\ll\Pcal$. 
% Equivalently, every e-variable is finite $Q$-almost surely, or the e-variable class is bounded in $Q$-probability.
\end{theorem}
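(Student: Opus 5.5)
Existence, uniqueness and the finiteness criterion, in that order.

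\emph{Uniqueness} is the easy part. Suppose $E_1^\star$ and $E_2^\star$ are both numeraires. Applying \eqref{eq:num} twice gives $\E_Q[E_1^\star/E_2^\star]\le 1$ and $\E_Q[E_2^\star/E_1^\star]\le 1$. Set $X=E_1^\star/E_2^\star$, which is positive $Q$-a.s. Then
\[
\E_Q\bigl[X+X^{-1}\bigr]\le 2 .
\]
Since $x+1/x\ge 2$ with equality only at $x=1$, this forces $X=1$ $Q$-a.s. On the set where both are infinite we adopt the convention $\infty/\infty=1$. Alternatively, we can compare through the strictly positive e-variable $(E_1^\star+E_2^\star)/2$, which is also legitimate because $\Ecal(\Pcal)$ is convex.

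\emph{Existence} is the main obstacle, since $\Pcal$ is not dominated and there is no reference measure. I would follow the Larsson--Ruf--Ramdas route and maximize a bounded concave surrogate of log-wealth rather than $\E_Q\log E$ itself, because the latter may be $+\infty$. Fix the surrogate
\[
\E_Q\Bigl[\log\frac{1+E}{2}\Bigr]
\quad\text{or}\quad
\sup_E \E_Q[\log(E+\varepsilon)].
\]
The steps are then:
\begin{enumerate}
\item Show that $\Ecal(\Pcal)$ is convex and closed under $Q$-a.s.\ convergence in the Fatou sense. This follows from Fatou's lemma applied under each $P$.
\item Take a maximizing sequence $E_n$. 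Use Koml\'os' lemma, or forward convex combinations in $L^0(Q)$ with values in $[0,\infty]$, to extract convex combinations converging $Q$-a.s.\ to some $\num\in\Ecal(\Pcal)$.
\item Show that $\num$ attains the supremum. Upper semicontinuity is available because $\log(1+x)$ is handled via uniform integrability of the positive part. This is delicate, and for that reason I would truncate: solve the problem for each $\varepsilon$ and let $\varepsilon\downarrow 0$.
\item Obtain \eqref{eq:num} from the first-order condition. For any $E$, optimality of the perturbation $(1-t)\num+tE$ gives
\[
\frac{d}{dt}\Big|_{t=0^+}\ \ge\ \text{(must be)}\ \le 0,
\qquad\text{that is}\qquad
\E_Q\bigl[(E-\num)/\num\bigr]\le 0 .
\]
Differentiating under the integral is justified by monotone convergence of the difference quotients $\log(1+t(E/\num-1))/t$.
\end{enumerate}
Strict positivity of $\num$ comes out of the same condition, by comparing with $E\equiv 1$.

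\emph{Finiteness criterion.} Suppose first that $Q\ll\Pcal$ fails. Then there is an $A$ with $P(A)=0$ for all $P\in\Pcal$ and $Q(A)>0$. The variable $E=\infty\ind_A+1$ is an e-variable. If $\num$ were finite on $A$, then \eqref{eq:num} would give $\E_Q[E/\num]=\infty$, a contradiction, so $\num=\infty$ on $A$. Conversely, suppose $Q(\num=\infty)>0$. Then I would show that $B=\{\num=\infty\}$ is $\Pcal$-null for every $P$: we have $\E_P[\num]\le 1$ for every $P\in\Pcal$, which forces $P(\num=\infty)=0$. So $B$ is $\Pcal$-null yet has positive $Q$-probability, violating $Q\ll\Pcal$.

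I expect Step 3 (attainment, together with the limiting argument $\varepsilon\to0$ in the nondominated, possibly infinite-valued setting) to be the hardest point. It is where the reverse-information-projection or effective-null duality of \citet{LarssonEtAl2025} would be invoked if the direct argument stalls.
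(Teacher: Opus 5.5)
The paper does not prove this theorem; it imports it as Theorem~2.6 of \citet{LarssonEtAl2025}. Your uniqueness argument and both directions of the finiteness criterion are correct and self-contained. (In the first direction the conclusion is $Q(A\cap\{\num<\infty\})=0$. In the uniqueness step it is $\E_Q[X]\le1$ that rules out sets where exactly one of the two numeraires is infinite.)

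\textbf{The gap is in existence, at Step~3.} Neither surrogate is bounded, and $\{\log^+E:E\in\Ecal(\Pcal)\}$ need not be $Q$-uniformly integrable, even when $Q\ll\Pcal$. Take a simple null $\{P\}$ with $Q\ll P$ and $\E_Q[\log(dQ/dP)]=\infty$. Then $\sup_E\E_Q[\log((1+E)/2)]$ and $\sup_E\E_Q[\log(E+\varepsilon)]$ are $+\infty$ for every $\varepsilon>0$, and the supremum is attained by non-numeraires such as $(1+dQ/dP)/2$. With an infinite objective, the perturbation in Step~4 compares $+\infty$ with $+\infty$ and yields nothing. Your $\varepsilon$-device repairs the lower tail ($\log 0$), but the obstruction is the upper tail; this is exactly the regime of \cref{rem:relative}.

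Two further problems remain even when the surrogate is finite. First, its first-order condition is $\E_Q[(E+\varepsilon)/(\num_\varepsilon+\varepsilon)]\le1$, not \eqref{eq:num}. The difference quotient you wrote belongs to the un-truncated objective, so a limit in $\varepsilon$ would still need an argument. Second, when $Q\not\ll\Pcal$, nothing in Steps~2--4 controls the set where the limit is infinite. This matters because the convention $\infty/\infty=1$ forces a specific normalization off that set.

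\textbf{A repair that keeps your strategy: truncate from \emph{above}, using that $\Ecal(\Pcal)$ is solid.}
\begin{enumerate}
\item Let $N$ be a $\Pcal$-polar set of maximal $Q$-probability; it exists because countable unions of polar sets are polar.
\item For each $k$, maximize $\E_Q[\log Y\,\ind_{N^c}]\le\log k$ over $\{Y\in\Ecal(\Pcal):Y\le k\}$. Koml\'os' lemma and reverse Fatou give a maximizer $Y_k$, provided you take the pointwise $\liminf$ of the convex combinations as the representative. This is needed because $Q$-null sets need not be $\Pcal$-null, so your Step~1 claim of closedness under $Q$-a.s.\ limits is false as literally stated.
\item The perturbation argument of \cref{thm:equivalence} gives $\E_Q[Y/Y_k\,\ind_{N^c}]\le Q(N^c)$ for every $Y\in\Ecal(\Pcal)$ with $Y\le k$.
\item For $X\in\Ecal(\Pcal)$, apply this to $X\wedge m$ with $k\ge m$. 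Pass to forward convex combinations of $(Y_k)$ using convexity of $y\mapsto1/y$, take a pointwise $\liminf$ $\hat Y$, apply Fatou, and let $m\to\infty$. This gives $\E_Q[X/\hat Y\,\ind_{N^c}]\le Q(N^c)$. Moreover $\hat Y>0$ (take $X=1$) and $\hat Y<\infty$ (maximality of $N$) $Q$-a.s.\ on $N^c$.
\item Set $\num=\hat Y$ on $N^c$ and $\num=\infty$ on $N$. Then $\num$ is an e-variable and satisfies \eqref{eq:num} with the extended conventions, since $\E_Q[X/\num]\le Q(N^c)+Q(N)=1$.
\end{enumerate}
That would give an elementary existence proof, which the paper does not attempt; your Step~3 as written does not.
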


This is Theorem~2.6 of \citet{LarssonEtAl2025}, together with their uniqueness observation.  
Their proof is non-elementary; the competitive and testing arguments below are proved directly.

\paragraph{Private randomization and the comparison probability.}

A privately randomized e-variable is a measurable $T(\omega,z)\geq0$, where the seed $Z$ has a fixed law $\rho$, independent of the data under every null and alternative under consideration, and
\begin{equation}\label{eq:random-e}
 \E_{P,\rho}[T]\leq1\qquad(P\in\Pcal).
\end{equation}
Only this ex ante condition is required: fixing a particular seed need not leave an e-variable. 
By Tonelli's theorem,
\begin{equation}\label{eq:average}
 \overline T(\omega):=\int T(\omega,z)\,\rho(dz)\in\Ecal(\Pcal).
\end{equation}
Two competing players use independent private seeds, but naturally evaluate their statistics on the same data. Whenever auxiliary randomization is present, $\Pr_Q$ and $\E_Q$ include it unless a conditioning statement says otherwise. In particular, the half-probability guarantee below is under $Q$ and the lotteries jointly, not conditional on every realized dataset.

\section{The Bell--Cover mechanism on a convex class}\label{sec:mechanism}

The following elementary calculation contains the randomization argument.

\begin{lemma}[Uniform comparison identity]\label{lem:uniform}
Let $0<S^{\star}<\infty$ and $S\geq0$ be random variables, and let $U\sim\Unif(0,2)$ be independent of both. Denoting their ratio as $R=S/S^{\star}$, we have
\begin{equation}\label{eq:identity}
 \Pr_Q(S\geq US^{\star})
 =\E_Q\!\left[\min\left\{\frac{R}{2},1\right\}\right].
\end{equation}
Consequently, if $\E_Q[R]\leq1$, then
\begin{equation}\label{eq:half}
 \Pr_Q(US^{\star}>S)\geq\frac12.
\end{equation}
More generally, for every $c>0$,
\begin{equation}\label{eq:relative-tail}
 \Pr_Q(S\geq cUS^{\star})
 \leq \min\left\{1,\frac{1}{2c}\right\}.
\end{equation}
\end{lemma}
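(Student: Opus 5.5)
The plan is to condition on the pair $(S,S^{\star})$ and use the independence of $U$, so that everything reduces to a deterministic computation with the uniform distribution function. Since $0<S^{\star}<\infty$, the event $\{S\geq US^{\star}\}$ equals $\{U\leq R\}$ with $R=S/S^{\star}\in[0,\infty]$. For a fixed value $r\in[0,\infty]$, the uniform law on $(0,2)$ gives
\[
\Pr(U\leq r)=\min\{r/2,1\},
\]
where the case $r=\infty$ gives $1$. By independence of $U$ from $(S,S^{\star})$, Fubini/Tonelli (or the freezing lemma for conditional expectations) then yields $\Pr_Q(U\leq R)=\E_Q[\min\{R/2,1\}]$. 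This is \eqref{eq:identity}. The only care needed is measurability of $R$ as an extended random variable, which holds, and the convention for $R=\infty$. Under the hypothesis $\E_Q[R]\leq1$ we have $R<\infty$ almost surely anyway.

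For \eqref{eq:half}, I would use $\min\{R/2,1\}\leq R/2$. This gives $\Pr_Q(S\geq US^{\star})\leq \E_Q[R]/2\leq 1/2$. The complement of the event $\{S\geq US^{\star}\}$ is exactly $\{US^{\star}>S\}$, so its probability is at least $1/2$.

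For \eqref{eq:relative-tail}, I would rerun the same conditioning argument with threshold $cU$. The event $\{S\geq cUS^{\star}\}$ equals $\{U\leq R/c\}$, whose conditional probability is $\min\{R/(2c),1\}$. This is bounded both by $1$ and by $R/(2c)$, and taking expectations gives $\min\{1,\E_Q[R]/(2c)\}$. One point needs checking here. The bound as stated has no $\E_Q[R]$ factor, so it must be meant under the same standing hypothesis $\E_Q[R]\leq1$ as the preceding "consequently" clause. Without that hypothesis it is false: take $R$ constant and equal to $4$, with $c=1$. I will state the hypothesis explicitly when writing this step.

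There is no real obstacle. The only step that deserves a sentence of justification is the passage from conditional to unconditional probability via independence. I will handle it with Tonelli on the product of the law of $(S,S^{\star})$ and $\Unif(0,2)$, which is legitimate because the integrand is an indicator and hence nonnegative.
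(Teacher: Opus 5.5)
Your proposal is correct and matches the paper's proof: condition on $(S,S^{\star})$, use $\Pr(U\leq r)=\min\{r/2,1\}$, take expectations, bound $\min\{R/2,1\}\leq R/2$, and replace $R$ by $R/c$ for the relative tail. You are also right that \eqref{eq:relative-tail} needs the standing hypothesis $\E_Q[R]\leq1$ (your example $R\equiv4$, $c=1$ shows it fails otherwise), and the paper's one-line ``replace $R$ by $R/c$'' uses this hypothesis without saying so.
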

\begin{proof}
Condition on $(S,S^{\star})$. The probability that $U\leq R$ is $\min\{R/2,1\}$. Taking expectations proves \eqref{eq:identity}, and $\min\{R/2,1\}\leq R/2$ gives \eqref{eq:half}. Replace $R$ by $R/c$ to obtain \eqref{eq:relative-tail}. Independence and continuity of $U$ remove finite ties.
\end{proof}

The bound \eqref{eq:relative-tail} is sharp: take $S=S^{\star}$. The non-randomized bound is $\Pr_Q(S\geq cS^{\star})\leq\min\{1,1/c\}$. In the portfolio setting these are precisely the two types of comparison bounds given in \citet[Corollaries~1--2]{BellCover1980}.

\subsection{Log-optimality, expected ratios, and competitive optimality}

\begin{theorem}[Three equivalent properties]\label{thm:equivalence}
Let $\Ccal$ be a convex family of nonnegative, $Q$-almost surely finite random variables, and let $S^{\star}\in\Ccal$ be strictly positive $Q$-almost surely. The following are equivalent:
\begin{enumerate}[label=(\roman*)]
\item $\E_Q[S/S^{\star}]\leq1$ for every $S\in\Ccal$.
\item $\E_Q[\log(S/S^{\star})]\leq0$ for every $S\in\Ccal$, where each expectation is well defined in $[-\infty,0]$.
\item For every $S\in\Ccal$ and an independent $U\sim\Unif(0,2)$,  we have $\Pr_Q(US^{\star}>S)\geq1/2$.
\end{enumerate}
\end{theorem}

The equivalence (i)--(ii) is \citet[Theorem~2]{BellCover1988}, also covered in~\cite{LarssonEtAl2025}. We include the brief argument below for completeness, and note that it is only their equivalence to (iii) that is new.

\begin{proof}
Assume (i). For $R=S/S^{\star}$, $\log^+R\leq R$ is integrable, and $\log R\leq R-1$. Thus $\E_Q[\log R]$ is well defined, possibly $-\infty$, and is at most zero. This proves (ii). Also, \cref{lem:uniform} immediately gives (iii).

Next suppose (ii), fix $S\in\Ccal$, and write $Y=S/S^{\star}-1\geq-1$. For $0<\varepsilon\leq1/2$, convexity makes $(1-\varepsilon)S^{\star}+\varepsilon S$ admissible, so
\(
 \E_Q\!\left[\frac{\log(1+\varepsilon Y)}{\varepsilon}\right]\leq0.
\)
The integrand tends pointwise to $Y$ as $\varepsilon\downarrow0$ and is bounded below by $\varepsilon^{-1}\log(1-\varepsilon)\geq-2$. Fatou's lemma therefore gives $\E_Q[Y]\leq0$, which is (i). 

Finally suppose (iii). Apply \eqref{eq:identity} to $(1-\varepsilon)S^{\star}+\varepsilon S$, so that  $R = 1+\varepsilon Y$, and hence
\[
 \E_Q[\min\{1+\varepsilon Y,2\}]\leq1,
 \qquad\text{or equivalently}\qquad
 \E_Q[\min\{Y,\varepsilon^{-1}\}]\leq0.
\]
Since $Y\geq-1$, monotone convergence after adding one gives $\E_Q[Y]\leq0$ as $\varepsilon\downarrow0$, which yields (i).
\end{proof}

\begin{remark}[Why relative log-optimality matters]\label{rem:relative}
If a finite expected-log optimum is attained, ordinary maximization of $\E_Q[\log S]$ gives the relative formulation under the usual well-definedness conditions. But relative log-optimality is meaningful even when separate expected logarithms are infinite. This point is already explicit in \citet[Theorem~2]{BellCover1988} and discussed again in~\cite{LarssonEtAl2025}; allowing infinite \emph{expected logarithms} should not be confused with allowing the wealth itself to be infinite.
\end{remark}

\begin{remark}[The above equivalence does not establish existence of a numeraire]
The convex class of constant wealths $\Ccal=\{s:0<s<1\}$ has no numeraire: for each candidate $s$, a larger admissible constant $t$ has $t/s>1$. Thus a general convex-family characterization is not a substitute for \cref{thm:lrr}.
\end{remark}

% \subsection{Recovering the portfolio theorem}

% Let $X=(X_1,\ldots,X_m)\geq0$ be the vector of gross returns under a known law $Q$, and let $\Delta_m=\{b\geq0:\sum_i b_i=1\}$. Suppose $b^{\star}$ attains a finite expected-log optimum over $b\in\Delta_m$, and put $S^{\star}=b^{\star\top}X$. The portfolio wealths form a convex class, so
% \[
%  \E_Q\!\left[\frac{b^{\top}X}{b^{\star\top}X}\right]\leq1,
%  \qquad b\in\Delta_m.
% \]
% More generally, let a random investment vector $B\geq0$ be independent of the market, with $\E[\sum_iB_i]\leq1$. Applying the preceding inequality to the individual stocks yields
% \[
%  \E_Q\!\left[\frac{B^{\top}X}{b^{\star\top}X}\right]
%  =\sum_i\E[B_i]\E_Q\!\left[\frac{X_i}{b^{\star\top}X}\right]\leq1.
% \]
% Therefore an independent  $U$ gives
% \begin{equation}\label{eq:portfolio}
%  \Pr_Q\bigl(Ub^{\star\top}X>B^{\top}X\bigr)\geq\frac12.
% \end{equation}
% This is the Bell--Cover conclusion. Notice that it is a pairwise comparison against each admissible opponent, not a simultaneous comparison against their hindsight maximum.

\subsection{The e-variable competitive theorem}\label{sec:egame}

\begin{theorem}[Bell--Cover randomization of a numeraire]\label{thm:egame}
Suppose $\num$ satisfies \eqref{eq:num} and \eqref{eq:finite}, and draw $U\sim\Unif(0,2)$ independently of the data and the opponent's seed. Then $U\num$ is a randomized e-variable for $\Pcal$, and for every privately randomized e-variable $T$ satisfying \eqref{eq:random-e},
\begin{equation}\label{eq:ehalf}
 \Pr_Q(U\num>T)\geq\frac12.
\end{equation}
The associated two-player game, with half credit for ties, has value $1/2$. A saddle point is obtained when both players use $\num$ multiplied by independent copies of $U$.
\end{theorem}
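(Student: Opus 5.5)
The plan is to reduce everything to \cref{lem:uniform} by averaging out the opponent's private seed. There are four steps.

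First, validity. For each $P\in\Pcal$, independence of $U$ from the data together with Tonelli gives $\E_{P}[U\num]=\E[U]\,\E_P[\num]\leq 1$. So $U\num$ satisfies \eqref{eq:random-e} with seed $U$.

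Second, the comparison inequality \eqref{eq:ehalf}. Fix a privately randomized $T(\omega,Z)$ with seed $Z\sim\rho$, and take $Z$, $U$ and the data mutually independent under $Q$. The key claim is
\[
\E_Q\!\left[T/\num\right]\leq 1.
\]
To prove it, I will use Tonelli to write $\E_Q[T/\num]=\E_Q[\overline T/\num]$, where $\overline T$ is the seed-average from \eqref{eq:average}. This uses only that $\num$ depends on $\omega$ alone and is finite and strictly positive $Q$-a.s. by \eqref{eq:finite}. Since $\overline T\in\Ecal(\Pcal)$, the numeraire inequality \eqref{eq:num} bounds the right side by $1$.

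Then I apply \cref{lem:uniform} on the product space $\Omega\times\mathcal Z$ with $S=T$ and $S^\star=\num$; both are defined there, and $U$ is independent of both. The hypothesis $\E_Q[R]\leq1$ has just been verified, so \eqref{eq:half} yields $\Pr_Q(U\num>T)\geq 1/2$. I expect this to be the main subtle point. One must observe that fixing a seed need not give an e-variable, so \eqref{eq:num} cannot be applied seedwise; the averaging through $\overline T$ is exactly what circumvents this. One must also check that $T=\infty$ has $Q$-probability zero, which follows from $\E_Q[T/\num]<\infty$ and $\num<\infty$.

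Third, the game value. Player one's payoff is $\Pr_Q(A>B)+\tfrac12\Pr_Q(A=B)$, where $A$ and $B$ are the two players' randomized e-variables. If player one uses $U\num$, then for any opponent $T$ the payoff is at least $\Pr_Q(U\num>T)\geq1/2$. The game is symmetric with constant sum $1$: the payoffs of the two players always add to one. Hence, by the same bound with the roles swapped, player two using an independent copy $U'\num$ guarantees that player one's payoff is at most $1/2$.

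Fourth, the saddle point. When both players use $U\num$ and $U'\num$, ties occur with probability zero because $U$ and $U'$ are continuous and $\num\in(0,\infty)$ $Q$-a.s. Symmetry then gives each player exactly $1/2$. Combined with the two one-sided guarantees, this shows the pair is a saddle point and the value is $1/2$.
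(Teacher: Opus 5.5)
Your proposal is correct and follows the paper's proof essentially step for step. Validity comes from independence, $\E[U]\E_P[\num]\leq1$. Averaging out the opponent's seed gives $\E_Q[T/\num]=\E_Q[\overline T/\num]\leq1$, and \cref{lem:uniform} then yields the bound. The value and saddle point follow from the constant-sum identity $H_Q(A,B)+H_Q(B,A)=1$ together with $\Pr(U>U')=1/2$ once the common factor $\num\in(0,\infty)$ cancels. Your explicit checks that ties have probability zero and that $T<\infty$ $Q$-a.s. fill in details the paper leaves implicit.
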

\begin{proof}
For every $P\in\Pcal$, 
\(
 \E_{P,U}[U\num]=\E[U]\E_P[\num]\leq1.
\)
For a randomized competitor, \eqref{eq:average} and the numeraire property give
\begin{equation}\label{eq:randomratio}
 \E_{Q,\rho}\!\left[\frac{T}{\num}\right]
 =\E_Q\!\left[\frac{\overline T}{\num}\right]\leq1.
\end{equation}
The uniform comparison identity proves \eqref{eq:ehalf}.
To state the minimax claim, let
\[
 h(a,b)=\ind\{a>b\}+\tfrac12\ind\{a=b\},
 \qquad H_Q(A,B)=\E_Q[h(A,B)].
\]
For independent private randomizations, $H_Q(A,B)+H_Q(B,A)=1$. The result already proved shows that playing $U\num$ guarantees at least $1/2$, while playing against an opponent $U'\num$ guarantees at most $1/2$. With both strategies, their common positive finite factor cancels and $\Pr(U>U')=1/2$. Consequently
\[
 \sup_A\inf_B H_Q(A,B)=\inf_B\sup_A H_Q(A,B)=\frac12.
\]
\end{proof}

% This theorem combines the Bell--Cover mechanism with the existence input of \citet{LarssonEtAl2025}. 
% It requires no representation of e-variables as stock portfolios. 
For any convex betting class, the same proof applies whenever that class has a numeraire, whose existence needs its own justification (as we have seen previously).

% \subsection{A likelihood-ratio example}

% For a simple null $\Pcal=\{P\}$ and $Q\ll P$, let $L=dQ/dP$. Then $L$ is a numeraire. Indeed, with $\{L>0\}$ understood up to $P$-null sets,
% \begin{equation}\label{eq:likelihood}
%  \E_Q[E/L]=\E_P[E\ind\{L>0\}]\leq\E_P[E]\leq1.
% \end{equation}
% The indicator is useful when $P$ and $Q$ are not equivalent. Thus $UL$ beats every $P$-valid privately randomized e-variable at least half the time under $Q$.

\subsection{What happens when the numeraire is infinite?}\label{sec:infinity}

The unrestricted existence theorem uses $x/\infty=0$ for finite $x$ and $\infty/\infty=1$. A strict-win statement cannot ignore this issue. For example, take $P=\delta_0$, $Q=\delta_1$, and an e-variable with $\num(0)=1$, $\num(1)=\infty$. Against the competitor $E=\num$, $U\num=\num=\infty$ under $Q$, so the strict-win probability is zero, not one half.

The tie-adjusted formulation does extend without the finiteness assumption, as shown below.

\begin{proposition}[Extended-value game]\label{prop:infinite}
With the extended-value numeraire of \citet{LarssonEtAl2025}, $U\num$ still guarantees
\[
 \E_Q[h(U\num,T)]\geq\frac12
\]
against every independently privately randomized e-variable $T$. Independent randomized numeraires form a saddle point with value $1/2$.
\end{proposition}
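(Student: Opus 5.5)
We prove the guarantee by splitting $\Omega$ into the set $A=\{\num=\infty\}$ and its complement, and we handle each piece directly with the conventions $x/\infty=0$ for finite $x$ and $\infty/\infty=1$. Let $T$ be privately randomized with seed law $\rho$, and let $\overline T$ be its average from \eqref{eq:average}. Then $\overline T\in\Ecal(\Pcal)$, so the extended numeraire inequality of \citet{LarssonEtAl2025} gives $\E_Q[\overline T/\num]\leq 1$ with these conventions. We define the extended ratio $R=T/\num$ pointwise in $(\omega,z)$ under the same conventions.

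On $A^c$ we have $0<\num<\infty$. Conditioning on $(\omega,z)$ gives $\E_U[h(U\num,T)]=1-\min\{R/2,1\}$, exactly as in \cref{lem:uniform}, since ties have $U$-probability zero. On $A$ we have $U\num=\infty$. If $T<\infty$ then $h=1$ and $R=0$, so $h\geq 1-R/2$. If $T=\infty$ then $h=1/2$ and $R=1$, so again $h=1-R/2$. In every case $\E_U[h(U\num,T)]\geq 1-R/2$ pointwise. Taking $\E_{Q,\rho}$ and applying Tonelli gives
\[
\E_Q[h(U\num,T)]\geq 1-\tfrac12\E_{Q,\rho}[R].
\]

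It remains to show $\E_{Q,\rho}[T/\num]\leq\E_Q[\overline T/\num]\leq1$, and this is the main obstacle. On $A^c$, Tonelli gives equality. On $A$, the value of $T/\num$ is $\ind\{T=\infty\}$. If $\rho(T(\omega,\cdot)=\infty)>0$, then $\overline T(\omega)=\infty$, so $\overline T/\num=1\geq\rho(T=\infty)$. Otherwise the left side is $0$. So the required inequality holds pointwise in $\omega$ after integrating over $z$. Integrating over $Q$ completes the guarantee. We must check that the extended numeraire inequality in \citet{LarssonEtAl2025} covers $\overline T$ taking infinite values on $A$; their $\Ecal(\Pcal)$ allows $[0,\infty]$-valued variables, so it does.

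For the saddle point, we first verify that $U\num$ is valid: $\E_P[U\num]=\E_P[\num]\leq1$, since $\num$ is $P$-a.s. finite for every $P\in\Pcal$. Next, the identity $H_Q(A,B)+H_Q(B,A)=1$ holds for any pair, because $h(a,b)+h(b,a)=1$ for all extended reals $a,b$. Applying the guarantee with the roles reversed shows that the opponent $U'\num$ holds every strategy to at most $1/2$. The two bounds together give $\sup\inf=\inf\sup=1/2$, exactly as in the proof of \cref{thm:egame}. Finally, we check the play $U\num$ against $U'\num$ directly. On $A^c$ the common factor cancels, giving $\Pr(U>U')=1/2$; on $A$ both sides are $\infty$ and $h=1/2$.
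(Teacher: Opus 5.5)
Your proposal is correct and follows the paper's proof essentially step for step. It uses the same split into $\{\num=\infty\}$ and its complement, and the same key bound $\int R\,d\rho=\rho\{T=\infty\}\leq\ind\{\overline T=\infty\}=\overline T/\num$ on the infinite region to get $\E_{Q,\rho}[R]\leq1$. It has the same conditional payoff $1-\min\{R/2,1\}$ (which equals $1-R/2$ on the infinite region), and the same saddle-point argument via $h(a,b)+h(b,a)=1$ together with $\Pr(U>U')=1/2$. Your extra checks — validity of $U\num$, and that $\Ecal(\Pcal)$ admits the $[0,\infty]$-valued $\overline T$ — are correct and only make explicit what the paper leaves implicit.
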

\begin{proof}
Define $R=T/\num$ using the extended ratio convention. On $\{\num<\infty\}$, averaging over the opponent's seed gives $\int R\,d\rho=\overline T/\num$. On $\{\num=\infty\}$,
\[
 \int R\,d\rho=\rho\{z:T(\omega,z)=\infty\}
 \leq\ind\{\overline T=\infty\}=\overline T/\num.
\]
Thus $\E_Q[R]\leq1$. For finite positive $\num$, conditioning on the data and the opponent's seed gives conditional payoff $1-\min\{R/2,1\}$. The same expression holds when $\num=\infty$: it is one if $T$ is finite and one half if $T=\infty$. Integrating proves the lower bound. Against another independent randomized numeraire, the conditional expected payoff is one half on both the finite and infinite regions. Symmetry gives the saddle-point statement.
\end{proof}

% The stronger strict-win wording used in the rest of the paper therefore has a clear scope, while the modern existence theorem retains a competitive interpretation in its full generality.

\section{The subtleties of optional stopping: validity vs.\ competitive optimality}\label{sec:validity}

We work in discrete time with a filtration $(\Fcal_t)_{t\geq0}$ and a trivial initial data sigma-field. An \emph{e-process} is a nonnegative adapted process $(E_t)$, with $E_0=1$, such that
\begin{equation}\label{eq:eprocess}
 \E_P[E_\tau]\leq1
 \quad\text{for every bounded stopping time $\tau$ and every $P\in\Pcal$}.
\end{equation}
Fatou's lemma extends this bound to almost surely finite stopping times. For a stopping time that may be infinite, the corresponding bound on $E_\tau\ind\{\tau<\infty\}$ also follows by Fatou. 
% The above  requirement is stronger than requiring an e-variable at each deterministic time; see \citet[Chapter~7]{RamdasWang2025}.

\begin{theorem}[Independent initial-capital randomization]\label{thm:processvalid}
Let $(E_t)$ satisfy \eqref{eq:eprocess}. Let $A\geq0$ have a fixed law $\nu$, independent of the entire data process under every $P\in\Pcal$, and satisfy $\E[A]\leq1$. On the product experiment, put $\widetilde E_t=AE_t$ and reveal $A$ at time zero. Then
\begin{equation}\label{eq:randomstop}
 \E_{P,\nu}[\widetilde E_\tau]\leq1
\end{equation}
for every bounded stopping time in the enlarged filtration $\mathcal G_t=\Fcal_t\vee\sigma(A)$. In particular, for $0<\alpha<1$,
\begin{equation}\label{eq:randomville}
 \Pr_{P,\nu}\!\left(\exists t\geq0:AE_t\geq1/\alpha\right)\leq\alpha.
\end{equation}
\end{theorem}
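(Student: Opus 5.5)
The plan is to condition on $A$. Because $A$ is revealed at time zero and is independent of the data, a stopping time $\tau$ for $\mathcal G_t=\Fcal_t\vee\sigma(A)$ becomes, for each fixed value $a$ of $A$, an $(\Fcal_t)$-stopping time. The e-process bound \eqref{eq:eprocess} then applies inside the conditioning, and averaging over $\nu$ finishes the argument.

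\textbf{Step 1: section the stopping time.} Work on the product space $\Omega\times[0,\infty)$ with measure $P\otimes\nu$. Given a bounded $\mathcal G$-stopping time $\tau\leq N$, each event $\{\tau=t\}$ lies in $\Fcal_t\otimes\sigma(A)$. A monotone class argument shows that the section $\{\omega:\tau(\omega,a)=t\}$ lies in $\Fcal_t$ for every $a$. Hence $\tau_a(\omega):=\tau(\omega,a)$ is an $(\Fcal_t)$-stopping time bounded by $N$.

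The delicate point is the precise form of $\mathcal G_t$. If $\mathcal G_t$ is taken as $\Fcal_t\vee\sigma(A)$ inside a general space rather than as a product, I would realize the experiment canonically as the product. This is legitimate because $A$ is independent of the whole data process under each $P$, and the claimed bounds depend only on the joint law.

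\textbf{Step 2: conditional bound and averaging.} By independence (Fubini on the product measure),
\[
\E_{P,\nu}[AE_\tau]=\int a\,\E_P[E_{\tau_a}]\,\nu(da).
\]
By \eqref{eq:eprocess}, $\E_P[E_{\tau_a}]\leq1$ for each $a$, so the right side is at most $\int a\,\nu(da)=\E[A]\leq1$. This proves \eqref{eq:randomstop}. Measurability of $(\omega,a)\mapsto E_{\tau(\omega,a)}(\omega)$ follows from writing it as $\sum_{t\le N}E_t\ind\{\tau=t\}$. Tonelli applies because everything is nonnegative.

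\textbf{Step 3: Ville-type bound.} Let $\sigma=\inf\{t:AE_t\geq1/\alpha\}$, which is a $\mathcal G$-stopping time, and apply \eqref{eq:randomstop} to the bounded stopping time $\sigma\wedge N$. Markov's inequality gives
\[
\Pr(\sigma\leq N)\leq\alpha\,\E[AE_{\sigma\wedge N}]\leq\alpha.
\]
Letting $N\to\infty$ and using continuity from below yields \eqref{eq:randomville}.

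I expect Step 1 to be the main obstacle: justifying that stopping with knowledge of $A$ reduces to an ordinary stopping time for each fixed $A$. Everything after that is routine.
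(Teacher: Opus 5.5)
Your proof is correct and follows the paper's argument essentially step for step. You work on the product space, observe that each section $\tau_a$ of a bounded $\mathcal G$-stopping time is an $(\Fcal_t)$-stopping time, and apply Tonelli with \eqref{eq:eprocess} to get $\E_{P,\nu}[AE_\tau]=\int a\,\E_P[E_{\tau_a}]\,\nu(da)\leq\E[A]\leq1$; you then obtain \eqref{eq:randomville} by stopping the first crossing time at $n$ and letting $n\to\infty$. Your monotone-class justification of the sections and your reduction of a general space to the canonical product fill in details that the paper leaves implicit.
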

\begin{proof}
We work in the product space of  the data and lottery $A$. For a fixed $a$, the section $\tau_a$ of a bounded $\mathcal G$-stopping time is an $\Fcal$-stopping time. Tonelli and \eqref{eq:eprocess} give
\[
 \E_{P,\nu}[AE_\tau]
 =\int a\,\E_P[E_{\tau_a}]\,\nu(da)
 \leq\int a\,\nu(da)\leq1.
\]
Let $\gamma=\inf\{t:AE_t\geq1/\alpha\}$. Apply \eqref{eq:randomstop} at $\gamma\wedge n$ and use nonnegativity to obtain $\Pr(\gamma\leq n)\leq\alpha$. Let $n$ increase to infinity.
\end{proof}

This theorem does not require a numeraire, log-optimality, or even a supermartingale representation of the original e-process. It permits the stopping rule to inspect the realized lottery $A$. If one insists that the displayed e-process start deterministically at one, prepend a preliminary lottery bet:
\[
 1\ \longrightarrow\ A\ \longrightarrow\ AE_1\ \longrightarrow\ AE_2\ \longrightarrow\cdots.
\]
The same argument gives a conventional initial-one e-process on that expanded time scale.

For $A=U$, validity is \emph{ex ante}, not conditional at level $\alpha$ for every lottery outcome. Conditional on $U=u$, the ordinary crossing argument gives the bound $\min\{\alpha u,1\}$. Values $u>1$ spend more than the nominal conditional error budget, compensated by values $u<1$. Selecting a favorable draw, or redrawing until the capital exceeds one, changes the experiment and invalidates the above justification.

\subsection{With stopping, competitive optimality and validity are different issues}\label{sec:sequential}

The previous result showed that we get a \emph{valid} e-process even when the stopping time can access $U$. However, the competitive optimality result below only holds if the stopping time is independent of $U$.

% \subsection{A stopped-ratio criterion}

\begin{proposition}[Competition at an independent evaluation]\label{prop:stopped}
Let $\tau$ be a comparison stopping time and let $E_\tau^\star$ be positive and finite $Q$-almost surely. Suppose that, for every competitor under consideration,
\begin{equation}\label{eq:stoppedratio}
 \E_Q[E_\tau/E_\tau^\star]\leq1.
\end{equation}
If $U\sim\Unif(0,2)$ is independent of the evaluated pair $(E_\tau,E_\tau^\star)$, then
\begin{equation}\label{eq:stoppedhalf}
 \Pr_Q(UE_\tau^\star>E_\tau)\geq\frac12.
\end{equation}
\end{proposition}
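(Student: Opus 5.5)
This is a direct application of \cref{lem:uniform}, with $S^{\star}=E^\star_\tau$ and $S=E_\tau$. I would first put the stopped random variables into a form the lemma accepts. By hypothesis $0<E^\star_\tau<\infty$ holds $Q$-almost surely. The competitor $E_\tau$ is nonnegative. Extended values for $E_\tau$ are excluded up to $Q$-null sets, because \eqref{eq:stoppedratio} forces $E_\tau/E^\star_\tau$ to be finite $Q$-almost surely, and hence $E_\tau<\infty$ almost surely. Set $R=E_\tau/E^\star_\tau$, so that \eqref{eq:stoppedratio} reads $\E_Q[R]\leq 1$.

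The key step is to invoke the independence hypothesis precisely as the lemma uses it. The lemma needs only one thing: that $U$ is independent of the pair $(S,S^\star)$, so that conditioning on that pair leaves $U\sim\Unif(0,2)$. Here I would condition on $(E_\tau,E^\star_\tau)$, or equivalently on the sigma-field they generate, together with any private seed of the competitor. This gives
\[
\Pr_Q\bigl(E_\tau\geq UE^\star_\tau\,\big|\,E_\tau,E^\star_\tau\bigr)=\min\{R/2,1\}\leq R/2 .
\]
Taking expectations and using $\E_Q[R]\leq1$ yields $\Pr_Q(E_\tau\geq UE^\star_\tau)\leq 1/2$. Passing to the complement gives \eqref{eq:stoppedhalf}. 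Ties contribute nothing, since $U$ is continuous and independent of the pair, so $\Pr_Q(UE^\star_\tau=E_\tau)=0$.

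I expect the only real obstacle to be interpretive rather than computational: making sure the independence assumption is genuinely about the evaluated pair. If $\tau$ were allowed to depend on $U$, the pair $(E_\tau,E^\star_\tau)$, where $E^\star_\tau$ here means the unrandomized numeraire process stopped at $\tau$, would no longer be independent of $U$. The conditional law of $U$ given the pair would then not be uniform, and the identity \eqref{eq:identity} would fail. I would note this explicitly, since it is exactly the distinction from \cref{thm:processvalid}. The hypothesis as stated rules this out, so the proof reduces to the one-line conditioning argument above. Nothing about stopping times, filtrations or e-process validity is needed beyond \eqref{eq:stoppedratio}.
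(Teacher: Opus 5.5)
Your proposal is correct and matches the paper's proof, which simply applies \cref{lem:uniform} to the stopped pair $(E_\tau,E_\tau^\star)$; your conditioning argument spells out that application. One small point: conditioning additionally on the competitor's private seed is unnecessary and would require $U$ to be independent of that seed too, whereas conditioning on the pair alone is all the lemma needs.
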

\begin{proof}
Apply \cref{lem:uniform} to the stopped wealths.
\end{proof}

The criterion allows data-dependent stopping. It also allows a stopping rule to use an opponent's independent private seed, provided \eqref{eq:stoppedratio} holds on that experiment. What is excluded is dependence on the specific $U$ used in the comparison. Drawing a fresh $U$ after the stopping decision ensures the required independence.

% A standard sufficient condition for \eqref{eq:stoppedratio} is that $E_t/E_t^\star$ is a nonnegative $Q$-supermartingale starting at one. Optional stopping then gives the criterion at bounded stopping times, and Fatou gives it at $Q$-almost surely finite stopping times. There need not, however, be a single benchmark with this property against all e-processes for an arbitrary composite null.

\subsection{A simple extension: conditional log-optimal betting}

Here is the familiar setting in which the ratio-supermartingale argument works. At each time $t$, let the allowed wealth multipliers form a conditionally convex class of nonnegative $\Fcal_t$-measurable variables $f_t$, with
\[
 \E_P[f_t\mid\Fcal_{t-1}]\leq1\qquad(P\in\Pcal).
\]
Assume that a positive, finite, measurable choice $f_t^\star$ exists within this class and satisfies
\[
 \E_Q[\log(f_t/f_t^\star)\mid\Fcal_{t-1}]\leq0
 \quad\text{for every allowed $f_t$},
\]
with well-defined conditional expectations. This is conditional relative log-optimality. The conditional version of the convex-perturbation proof in \cref{thm:equivalence} gives
\begin{equation}\label{eq:conditionalratio}
 \E_Q\!\left[\frac{f_t}{f_t^\star}\,\middle|\,\Fcal_{t-1}\right]\leq1.
\end{equation}
Consequently $E_t=\prod_{s=1}^t f_s$ is a null test supermartingale, and
\[
 R_t=\frac{E_t}{E_t^\star}
 =\prod_{s=1}^t\frac{f_s}{f_s^\star}
\]
is a nonnegative $Q$-supermartingale. This proves competitive optimality at stopping times independent of $U$ within the specified betting class.

The multiperiod mechanism is already present in \citet[Theorem~4]{BellCover1988}. Their Section~6 also discusses conditional games and the sufficiency of terminal randomization. We are applying that mechanism to conditional e-bets. 
% The assumption about  existence of the conditional numeraire is substantive: the class of products of available conditional bets need not equal the class of all e-processes for a composite null.

\subsection{A simple example: likelihood-ratio processes against a simple null}

For a simple null $P$, let $Q|_{\Fcal_t}\ll P|_{\Fcal_t}$ for every finite $t$, and define
\[
 L_t=\frac{dQ|_{\Fcal_t}}{dP|_{\Fcal_t}}.
\]
The likelihood-ratio process is a $P$-martingale with initial value one and is positive and finite $Q$-almost surely at finite times. If $(E_t)$ is \emph{any} $P$-e-process, not necessarily a product of one-step bets, and $\tau$ is a bounded data stopping time, change of measure on each event $\{\tau=t\}$ gives
\begin{align}
 \E_Q[E_\tau/L_\tau]
 &=\sum_t\E_P\!\left[E_t\ind\{\tau=t,L_t>0\}\right]\notag\\
 &\leq\E_P[E_\tau]\leq1.\label{eq:lrstop}
\end{align}
For a $Q$-almost surely finite $\tau$, the same sum is bounded by $\E_P[E_\tau\ind\{\tau<\infty\}]\leq1$, with the latter inequality obtained from bounded stopping and Fatou. Thus the conclusion does not require global absolute continuity of the laws on the infinite-horizon sigma-field. Independent private randomization can be included by taking product measures with the common seed law.

It follows that $UL_\tau$ wins against every $P$-e-process at least half the time under $Q$, at every such comparison time independent of $U$. The related stopped log-optimality of likelihood ratios is stated in \citet[Theorem~7.11]{RamdasWang2025}. Equation~\eqref{eq:lrstop} does \emph{not} assert that $E_t/L_t$ is a $Q$-supermartingale for every e-process; only the stopped expectation is needed.

\subsection{Three ways a stronger claim can fail}

\begin{example}[The comparison time sees $U$]\label{ex:ustop}
Let $P=Q$ be a fair-coin law. Take the benchmark $E_t^\star\equiv1$ and the competitor $R_0=1$, with $R_1=2$ on heads and $R_1=0$ on tails; keep both processes constant after time one. They are valid test martingales, and the benchmark is the numeraire at every deterministic time. Draw an independent $U\sim\Unif(0,2)$ and stop at
\[
 \tau=\begin{cases}0,&U<1,\\1,&U\geq1.\end{cases}
\]
On $\{U<1\}$ the randomized benchmark immediately loses. On $\{U\geq1\}$ it wins only on tails. Therefore
\[
 \Pr_Q(UE_\tau^\star>R_\tau)=\frac14.
\]
Nevertheless, \cref{thm:processvalid} still applies to null error control. This example separates competitive optimality from e-process validity and from a simultaneous guarantee of staying ahead at all times.
\end{example}

\begin{example}[The opponent observes $U$]\label{ex:public}
Even with no data and benchmark $E^\star=1$, if an opponent's randomization is allowed to depend on the same $U$, they can use the randomized initial wealth equal to
\[
 T=(U+1/4)\ind\{U\leq3/2\}.
\]
It has $\E[T]=3/4\leq1$ but beats $U$ with probability $3/4$. Thus admissible ex ante mean alone does not protect a player who reveals its private seed before the opponent chooses its response. The independent-private-randomization protocol is part of the theorem.
\end{example}

\begin{example}[Marginal numeraires do not automatically form an e-process]\label{ex:marginal}
Let $\Omega=\{a,b,c\}$, let $\Fcal_1$ distinguish $\{a\}$ from $\{b,c\}$, and let $\Fcal_2$ reveal the outcome completely. In the coordinate order $(a,b,c)$, take
\[
 P_1=(3/4,1/4,0),\qquad P_2=(1/4,0,3/4),\qquad Q=(1/2,1/2,0).
\]
For $\Pcal=\{P_1,P_2\}$, the time-one numeraire is $E_1^\star=1$. Indeed, an $\Fcal_1$-measurable e-variable has the form $(x,y,y)$ and
\[
 \E_Q[E]=\tfrac12(x+y)=\tfrac12\E_{P_1}[E]+\tfrac12\E_{P_2}[E]\leq1.
\]
At time two, a numeraire is $E_2^\star=(2/3,2,0)$. Its null means are $1$ and $1/6$, and every e-variable satisfies
\[
 \E_Q[E/E_2^\star]=\tfrac34E(a)+\tfrac14E(b)=\E_{P_1}[E]\leq1.
\]
But stopping at time one on $\{a\}$ and time two otherwise yields
\[
 \E_{P_1}[E_\tau^\star]=\tfrac34\cdot1+\tfrac14\cdot2=\frac54>1.
\]
Thus $E_0^\star=1,E_1^\star,E_2^\star$ does not form an e-process, despite exact static optimality at both observation times. The general distinction is discussed in \citet[Section~7.9]{RamdasWang2025}.
\end{example}

\section{Important  caveats}\label{sec:costs}

\subsection{Risks of the competitive objective}

The competitive objective can strongly prefer a strategy that usually wins a little and occasionally loses everything. For $0<\varepsilon<1$, let
\[
 V=\begin{cases}
 (1-\varepsilon)^{-1},&\text{with probability }1-\varepsilon,\\
 0,&\text{with probability }\varepsilon.
 \end{cases}
\]
This is a fair lottery: $\E[V]=1$. Accordingly, $V\num$ is a valid randomized e-variable, and it beats the unrandomized $\num$ with probability $1-\varepsilon$. The probability can be arbitrarily close to one even though $\num$ is log-optimal, but it risks bankruptcy so has expected log-wealth equal to minus infinity. This is the fair-lottery phenomenon discussed by \citet[Section~2]{BellCover1980}. The uniform strategy protects against precisely this vulnerability in a probability-of-outperformance game.

\subsection{Randomization is only guaranteed to help the numeraire}

Randomization does not improve the competitive win probability of an arbitrary e-variable! 
For example, with a fair-coin null and an alternative concentrated on heads, $E=2\ind\{\text{heads}\}$ is an e-variable and equals two almost surely under the alternative. Randomizing the constant-one e-variable gives $U<2$ almost surely, so it loses to $E$ with probability one under $Q$. In addition, the optimal numeraire generally depends on $Q$: competitive optimality for a specified alternative is not a universal finite-sample guarantee against every alternative.

We attempted to generalize the above phenomena to asymptotic log-optimality (using the mixture or plug-in methods, and employing regret bounds) when $Q$ is not known, but none of the results we obtained were fundamentally different from the presented ones: they just separately combined the competitive optimality of the numeraire with an appropriate argument showing that one can often bound the wealth of a strategy from that of the numeraire. Thus, we did not include such results in the paper for simplicity, since they added much algebra but little new insight.

\subsection{The randomized numeraire is not the numeraire}

Enlarging the experiment by an independent ancillary seed does not dislodge $\num$ from its numeraire role. The averaging argument in \cref{sec:egame} shows that, for every randomized e-variable $T$,
\[
 \E_Q[T/\num]\leq\E_Q[\overline T/\num]\leq1,
\]
with equality in the first step under \eqref{eq:finite}. By contrast, in that finite-positive regime, the randomized wealth $U\num$ itself is not a numeraire on the enlarged experiment. Indeed, the original $\num$ remains admissible, but
\begin{equation}\label{eq:inverseuniform}
 \E_Q\!\left[\frac{\num}{U\num}\right]=\E[U^{-1}]=\infty.
\end{equation}
Thus the numeraire optimum and the randomized competitive optimum are different objects. 
% Neither criterion should be silently substituted for the other.

\subsection{An exact log-evidence penalty}

The relative logarithmic cost is particularly simple:
\begin{equation}\label{eq:logcost}
 \E_Q\!\left[\log\frac{U\num}{\num}\right]
 =\E[\log U]
 =\frac12\int_0^2\log u\,du
 =\log2-1\approx-0.30685.
\end{equation}
This relative calculation remains meaningful even if the separate expected logarithms of the two e-variables are infinite. When separate expectations are finite, it says that the randomization reduces expected log evidence by approximately $0.307$ nats, corresponding to a geometric multiplier $2/e\approx0.736$.

Without considering expectations, the randomization can at most double the underlying wealth, while its negative log effect is unbounded because $U$ can be arbitrarily close to zero. Against the underlying wealth itself, it increases the reported evidence half the time and decreases it half the time. It therefore provides neither stochastic dominance nor a pointwise improvement in evidence.

A \emph{single} initial draw does not change an asymptotic logarithmic growth rate: along positive-wealth paths,
\[
 \frac{\log(UE_t)-\log E_t}{t}=\frac{\log U}{t}\longrightarrow0.
\]
Repeating the randomization at every round is a different procedure. In a product-betting model where independent mean-one draws preserve conditional null validity, independent $U_1,U_2,\ldots$ impose
\[
 \frac1t\log\prod_{s=1}^t U_s\longrightarrow\log2-1
 \quad\text{almost surely}.
\]
Thus, for repeated randomization, the strong law turns a harmless asymptotic one-time offset into a persistent negative growth rate. Thus one should not insert such an extra randomization at every bet.

\section{Terminal testing: power and sharp calibration}\label{sec:terminal}

Suppose $E$ is any e-variable, not necessarily a numeraire, and $U\sim\Unif(0,2)$ is independent. Fix $0<\alpha<1$ and write $z=\alpha E$. Conditional on $E$, the ordinary test and the test based on the randomized capital have rejection probabilities
\begin{align}
 r_{\ord}(z)&=\ind\{z\geq1\},\label{eq:rordinary}\\
 r_{\comp}(z)&=\Pr_U(UE\geq1/\alpha\mid E)
 =\begin{cases}\pos{1-\dfrac1{2z}},&z>0,\\0,&z=0.\end{cases}\label{eq:rcover}
\end{align}
At $z=\infty$, all randomized rejection functions considered below are defined to equal one, their limiting value. The lottery $U$ increases the chance of rejection for $1/2<z<1$, but removes some rejections whenever $z\geq1$. At the original threshold $z=1$, rejection probability falls from one to one half, as we formalize below. Therefore it does not uniformly improve power, even before considering alternative-specific choices of $E$.

\subsection{Factor of two level conservativeness at fixed times}

% There is a sharper calibration fact than the generic e-variable bound.

\begin{proposition}[Sharp terminal conservativeness]\label{prop:halfsize}
For every e-variable $E$ independent of $U$,
\begin{equation}\label{eq:halfsize}
 \sup_{P\in\Pcal}\Pr_{P,U}(UE\geq1/\alpha)\leq\alpha/2.
\end{equation}
The constant is sharp over null distributions and e-variables. The same conclusion holds with $E=E_\tau$ for a null-valid stopped e-process evaluated at a stopping time independent of $U$.
\end{proposition}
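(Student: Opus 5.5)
The plan is to condition on $E$ and use the conditional rejection function $r_{\comp}$ from \eqref{eq:rcover}, then bound it by a linear function of $z=\alpha E$. For $z>0$ we have $r_{\comp}(z)=\pos{1-1/(2z)}$, and $r_{\comp}(0)=0$. The key elementary inequality is
\[
 \pos{1-\tfrac{1}{2z}}\leq \tfrac{z}{2}\qquad(z\geq0),
\]
with the convention $r_{\comp}(\infty)=1$ handled separately. For $z\leq 1/2$ the left side is zero. For $z>1/2$ the inequality is equivalent to $1-1/(2z)\leq z/2$, that is, $2z-1\leq z^2$, that is, $(z-1)^2\geq0$. Equality holds exactly at $z=1$.

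By independence of $U$ and $E$ (Tonelli on the product space), for each $P\in\Pcal$ we get
\[
 \Pr_{P,U}(UE\geq1/\alpha)=\E_P[r_{\comp}(\alpha E)]\leq\tfrac{\alpha}{2}\E_P[E]\leq\alpha/2.
\]
On $\{E=\infty\}$ the bound $r\leq \alpha E/2$ is trivially true, and since $\E_P[E]\leq1$ this set is $P$-null anyway. Taking the supremum over $P$ gives \eqref{eq:halfsize}. Equivalently, one can cite \cref{lem:uniform}'s style of computation directly: $\Pr(U\geq 1/z\mid E)=\pos{1-1/(2z)}$.

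For sharpness, the tangency at $z=1$ suggests taking a simple null $\Pcal=\{P\}$ and the constant e-variable $E\equiv1/\alpha$. This $E$ is not an e-variable, so I would instead use the e-variable $E=\alpha^{-1}\ind_A$ with $P(A)=\alpha$, which is available on any nonatomic space or on a suitable two-point space. Then $\E_P[E]=1$, and the rejection probability is $P(A)\cdot\Pr(U\geq1)=\alpha/2$, so equality is attained. Sharpness "over null distributions and e-variables" only requires exhibiting one such pair.

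For the stopped version, take a stopping time $\tau$ independent of $U$, with $U$ drawn afresh or otherwise independent of $(E_\tau)$. Null validity gives $\E_P[E_\tau]\leq1$, via \eqref{eq:eprocess} and Fatou for a.s.\ finite or possibly infinite $\tau$, reading $E_\tau\ind\{\tau<\infty\}$. The same conditioning argument applied to $E_\tau$ then gives the bound. The only delicate point, and the main thing to state carefully, is that the required independence is between $U$ and the pair (data, $\tau$). If $\tau$ could see $U$, the conditioning step fails, as in the discussion after \cref{thm:processvalid}, where only the level-$\alpha$ bound survives. I expect no real obstacle beyond verifying the elementary inequality and handling the infinite-value conventions.
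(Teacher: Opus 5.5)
Your proposal is correct and follows the paper's proof essentially line for line: the pointwise bound $r_{\comp}(z)\leq z/2$ via $(z-1)^2\geq0$, integration against $\E_P[E]\leq1$, and the same sharpness example $E=\alpha^{-1}\ind_A$ with $P(A)=\alpha$. One small correction is that equality in $r_{\comp}(z)\leq z/2$ also holds at $z=0$, not only at $z=1$, and this is exactly why that example is tight; your explicit treatment of the stopped case, using $\E_P[E_\tau]\leq1$ and independence of $U$ from the data and $\tau$, spells out what the paper leaves implicit.
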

\begin{proof}
For all $z\geq0$, $r_{\comp}(z)\leq z/2$. It is immediate when $z\leq1/2$; otherwise
\[
 \frac z2-r_{\comp}(z)=\frac{(z-1)^2}{2z}\geq0.
\]
Consequently $\E_P[r_{\comp}(\alpha E)]\leq(\alpha/2)\E_P[E]\leq\alpha/2$. For sharpness, take an event of null probability $\alpha$ and put $E=1/\alpha$ on that event and zero otherwise. This e-variable has mean one, and the randomized test rejects with probability $\alpha\Pr(U\geq1)=\alpha/2$.
\end{proof}

Thus the usual threshold $1/\alpha$ is conservative by a factor of two for this particular terminal product, even though the product itself has mean at most one. The uniform distribution spreads the available mass in a way that makes Markov's inequality non-sharp at one independent evaluation.

\begin{corollary}[A terminal recalibration]\label{cor:calibrated}
For $0<\alpha\leq1/2$, the terminal rule that rejects when
\begin{equation}\label{eq:calibrated}
 UE\geq\frac1{2\alpha}
\end{equation}
is level $\alpha$, sharply over null distributions and e-variables. Its conditional rejection probability is
\begin{equation}\label{eq:rcalibrated}
 r_{\calcomp}(z)=\begin{cases}\pos{1-\dfrac1{4z}},&z>0,\\0,&z=0.\end{cases}
\end{equation}
\end{corollary}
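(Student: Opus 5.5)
The plan is to reduce everything to \cref{prop:halfsize} by rescaling the level. Rejecting when $UE\geq1/(2\alpha)$ is the same as rejecting when $UE\geq1/\alpha'$ with $\alpha'=2\alpha$. The hypothesis $0<\alpha\leq1/2$ gives $0<\alpha'\leq1$.

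\textbf{Validity.} I will not invoke \eqref{eq:halfsize} verbatim, because that proposition was stated for $\alpha'<1$, while here $\alpha'$ may equal $1$. Instead I will rerun its one-line argument. It uses only the pointwise inequality $r_{\comp}(z)\leq z/2$ for all $z\geq0$, which holds for any level.

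Conditional on $E$, the rejection probability is $\Pr_U(U\geq 1/(2\alpha E))$. With $z=\alpha E$, this equals $\pos{1-\tfrac{1}{4z}}$ for $z>0$ and $0$ for $z=0$. This is \eqref{eq:rcalibrated}; it also follows from \eqref{eq:rcover} by substituting $2z$ for $z$, i.e.\ $r_{\calcomp}(z)=r_{\comp}(2z)$. Then $r_{\comp}(2z)\leq z$, and so
\[
\Pr_{P,U}(UE\geq 1/(2\alpha))=\E_P[r_{\comp}(2\alpha E)]\leq \alpha\,\E_P[E]\leq\alpha
\]
for every $P\in\Pcal$. The case $E=\infty$ is handled by the convention that the rejection function equals one there, together with the fact that an e-variable is finite $P$-almost surely.

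\textbf{Sharpness.} I will reuse the extremal example from \cref{prop:halfsize} at level $2\alpha$. Take a null event of probability $2\alpha\leq1$, which is where $\alpha\leq1/2$ is needed. Set $E=1/(2\alpha)$ on that event and $0$ otherwise, so $E$ has mean one. On the event, the rule rejects iff $U\geq1$, which has probability $1/2$. Hence the rejection probability is $2\alpha\cdot\tfrac12=\alpha$.

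The only point needing care is the existence of such an event. Sharpness is claimed over null distributions, so I may choose a null $P$ with a suitably rich (e.g.\ atomless) sample space. Apart from that, and the boundary case $\alpha=1/2$ addressed above, every step is routine.
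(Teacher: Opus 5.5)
Your proposal is correct and is essentially the paper's proof. You rerun the pointwise bound $r_{\comp}(w)\leq w/2$ at level $2\alpha$, using $r_{\calcomp}(z)=r_{\comp}(2z)$; rerunning it rather than citing \cref{prop:halfsize} verbatim correctly covers the boundary case $2\alpha=1$. Sharpness then comes from the same two-point e-variable $E=1/(2\alpha)$ on an event of null probability $2\alpha$.
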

\begin{proof}
The same calculation, with $2\alpha$ in place of $\alpha$, gives the bound. Equality holds when $E=1/(2\alpha)$ with null probability $2\alpha$ and zero otherwise.
\end{proof}

This is a \emph{testing calibration}, not an assertion that $2UE$ is an e-variable. In general its null mean can be two. It is also a terminal statement, whose naïve extension to continuous monitoring would be incorrect.

\subsection{Factor of two conservativeness disappears under optional stopping}

\begin{proposition}[The anytime bound remains sharp]\label{prop:anytimesharp}
For every $0<\alpha\leq1/2$, the bound $\alpha$ in \eqref{eq:randomville}, with $A=U$, can be approached arbitrarily closely by some nonnegative null martingale starting at one. 
% There is no general anytime improvement to $\alpha/2$.
\end{proposition}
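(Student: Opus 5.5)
The plan is to use a single explicit martingale whose running maximum has tail $\Pr(\sup_t M_t\geq c)\approx1/c$ \emph{simultaneously} for every level $c\geq1$. The point is that, conditional on $U=u$, the event $\{\exists t: uM_t\geq1/\alpha\}$ is a crossing of the level $1/(\alpha u)$. Ville's inequality bounds its conditional probability by $\min\{\alpha u,1\}$, as noted after \cref{thm:processvalid}. Averaging this bound over $U$ gives
\[
\E[\min\{\alpha U,1\}]=\E[\alpha U]=\alpha,
\]
because $\alpha\leq1/2$ makes $\alpha u\leq1$ for all $u\in(0,2)$. So it suffices to make the conditional crossing probability nearly equal to Ville's bound for all levels $c=1/(\alpha u)\in(1/(2\alpha),\infty)\subseteq(1,\infty)$ at once.

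\textbf{Construction.} Fix $\delta>0$ and the geometric grid $c_j=(1+\delta)^j$, $j\geq0$. Take a simple null $P$ under which the data are i.i.d.\ uniform variables $X_1,X_2,\ldots$. Set $M_0=1$. If $M_{t-1}=c_j$, then move to $M_t=c_{j+1}$ when $X_t\leq c_j/c_{j+1}=(1+\delta)^{-1}$, and to $M_t=0$ otherwise. Once $M$ reaches zero it stays there. Then:
\begin{enumerate}
\item $M$ is a nonnegative $P$-martingale starting at one, since $c_{j+1}\cdot c_j/c_{j+1}=c_j$.
\item Consequently it is an e-process satisfying \eqref{eq:eprocess}.
\item By a telescoping product, $\Pr_P(\sup_tM_t\geq c_j)=\Pr_P(M \text{ reaches } c_j)=1/c_j$ exactly.
\end{enumerate}

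\textbf{Conditional crossing bound.} For any level $c\geq1$, let $j$ be minimal with $c_j\geq c$, so that $c_j<(1+\delta)c$. Then
\[
\Pr_P(\sup_t M_t\geq c)=1/c_j\geq\frac{1}{(1+\delta)c}.
\]

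\textbf{Averaging over $U$.} Since $U$ is independent of the data, condition on $U=u$ and apply the previous bound with $c=1/(\alpha u)\geq1/(2\alpha)\geq1$:
\[
\Pr_{P,U}\bigl(\exists t:UM_t\geq1/\alpha\bigr)=\E\!\left[\Pr_P\bigl(\sup_tM_t\geq 1/(\alpha U)\bigr)\right]\geq\E\!\left[\frac{\alpha U}{1+\delta}\right]=\frac{\alpha}{1+\delta}.
\]
Combined with the upper bound $\alpha$ from \eqref{eq:randomville}, letting $\delta\downarrow0$ shows that $\alpha$ is approached arbitrarily closely.

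The only delicate step is the exact control of the running maximum at every level simultaneously. This is why I would use the "double-or-nothing" grid martingale rather than an overshooting random walk, and why the grid mesh $\delta$ is the only source of slack. A minor point to check is measurability of the crossing event in $(\omega,u)$. It is handled by the same product-space and Tonelli argument as in the proof of \cref{thm:processvalid}.
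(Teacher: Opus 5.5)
Your proof is correct and is essentially the paper's own argument. Your grid process is exactly the paper's martingale $M_n=r^n\ind\{Z_1=\cdots=Z_n=1\}$ with $r=1+\delta$ and success probability $1/r$, and your minimal grid index $j$ plays the role of the paper's $k(u)=\lceil\log_r(1/(\alpha u))\rceil$. This gives the same sandwich $\alpha u/r\leq r^{-k(u)}\leq\alpha u$, and averaging over $U$ yields $\alpha/r\leq\Pr(\exists n: UM_n\geq1/\alpha)\leq\alpha$.
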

\begin{proof}
Fix $r>1$. Under the null let $Z_1,Z_2,\ldots$ be independent Bernoulli variables with success probability $1/r$, and put
\[
 M_0=1,\qquad M_n=r^n\ind\{Z_1=\cdots=Z_n=1\}.
\]
This is a nonnegative mean-one martingale. Draw $U$ independently. For $u\in(0,2)$, the smallest number of successes needed to cross $1/\alpha$ is
\(
 k(u)=\left\lceil\log_r\frac1{\alpha u}\right\rceil.
\)
Since $\alpha u\leq1$, the conditional crossing probability is $r^{-k(u)}$, and
\[
 \frac{\alpha u}{r}\leq r^{-k(u)}\leq\alpha u.
\]
Averaging over $U$ gives
\[
 \frac\alpha r\leq\Pr\!\left(\exists n:UM_n\geq1/\alpha\right)\leq\alpha.
\]
Letting $r\downarrow1$ completes the proof.
\end{proof}

The first-crossing time depends on $U$, so \cref{prop:halfsize} cannot be applied there. For the lowered threshold $1/(2\alpha)$ and $\alpha\leq1/4$, the same construction with $2\alpha$ in place of $\alpha$ gives an anytime rejection probability arbitrarily close to $2\alpha$. Therefore the terminal recalibration in \eqref{eq:calibrated} must not be monitored indefinitely at level $\alpha$.

\section{Randomizing a test threshold instead of initial capital}\label{sec:threshold}

\subsection{Comparing to the uniformly randomized Markov's inequality}

A distinct use of a uniform variable is statistically more favorable when the objective is power from a given terminal e-variable. Let $V\sim\Unif(0,1)$ be independent and reject when
\begin{equation}\label{eq:umi}
 E\geq V/\alpha.
\end{equation}
Its conditional rejection probability is
\begin{equation}\label{eq:rumi}
 r_{\umi}(z)=\min\{z,1\},\qquad z=\alpha E.
\end{equation}
Thus its null rejection probability is at most $\alpha\E_P[E]\leq\alpha$. This is an application of the uniformly randomized Markov inequality developed in \citet{RamdasManole2026}. It retains every rejection of the ordinary test and can add rejections below the ordinary threshold.

\begin{proposition}[Power comparison with both Bell-Cover calibrations]\label{prop:dominance}
For every $z\geq0$,
\begin{equation}\label{eq:dominance}
 r_{\comp}(z)\leq r_{\calcomp}(z)\leq r_{\umi}(z).
\end{equation}
Consequently, the uniformly randomized Markov test has at least as much power, under every data distribution, as either the ordinary-threshold Bell-Cover test or its sharp terminal recalibration. This compares tests based on the \emph{same underlying} $E$.
\end{proposition}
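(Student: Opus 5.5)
The proof is a pointwise comparison of three explicit functions of $z\geq0$: $r_{\comp}$ from \eqref{eq:rcover}, $r_{\calcomp}$ from \eqref{eq:rcalibrated}, and $r_{\umi}$ from \eqref{eq:rumi}. Once \eqref{eq:dominance} holds for every $z$, the power statement follows by integration. Conditional on $E$, each test's rejection probability is the corresponding function evaluated at $z=\alpha E$, because $U$ and $V$ are independent of the data. The unconditional rejection probability under any data distribution is therefore $\E[r(\alpha E)]$, and integrating a pointwise inequality preserves it. The case $z=\infty$ is handled by the stated convention that all three functions equal one there. The case $z=0$ is immediate, since all three vanish.

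For the first inequality, I fix $z>0$ and note that $1-1/(2z)\leq 1-1/(4z)$. Since $x\mapsto\pos{x}$ is monotone, this gives $r_{\comp}(z)\leq r_{\calcomp}(z)$.

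For the second inequality, I split into two cases.
\begin{itemize}
\item If $z\leq 1/4$, then $r_{\calcomp}(z)=0\leq r_{\umi}(z)$.
\item If $z>1/4$, then $r_{\calcomp}(z)=1-1/(4z)<1$, so it suffices to show $1-1/(4z)\leq z$. Multiplying by $4z>0$, this is equivalent to $4z^2-4z+1\geq0$, that is, $(2z-1)^2\geq0$. This holds, with equality exactly at $z=1/2$.
\end{itemize}
Together with $r_{\calcomp}\leq 1$, this gives $r_{\calcomp}(z)\leq\min\{z,1\}=r_{\umi}(z)$.

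I do not expect any real obstacle, since each inequality reduces to a perfect square or to monotonicity. The only step needing care is the interpretation of the ``consequently'' clause. The comparison must be made with the same $\alpha$ and the same underlying $E$ on both sides, so that the ordinary-threshold rule at $1/\alpha$, the recalibration at $1/(2\alpha)$, and the threshold $V/\alpha$ are all functions of the same $z=\alpha E$. Also, Corollary~\ref{cor:calibrated} requires $\alpha\leq1/2$, but the pointwise inequality itself holds for all $z$ regardless.
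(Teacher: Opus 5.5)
Your proof is correct and matches the paper's argument. The first inequality comes from the ordering of thresholds (equivalently $1-1/(2z)\le 1-1/(4z)$), the second from the case split at $z=1/4$ plus the perfect square $(2z-1)^2\ge0$, and the power claim from integrating the pointwise bound against the law of $\alpha E$.

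The paper also records a realization-wise coupling $V=1-U/2$. Under it, $\{UE\ge 1/(2\alpha)\}\subseteq\{E\ge V/\alpha\}$ because $1/(2u)-(1-u/2)=(u-1)^2/(2u)\ge0$. This is a strengthening and is not needed for the proposition as stated.
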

\begin{proof}
The first inequality follows from the thresholds. For the second, the cases $z\leq1/4$ and $z\geq1$ are immediate. For $1/4<z<1$,
\[
 r_{\umi}(z)-r_{\calcomp}(z)
 =z-1+\frac1{4z}=\frac{(2z-1)^2}{4z}\geq0.
\]
Taking expectations proves the power comparison. There is also a pointwise coupling: set $V=1-U/2$, so $V$ is uniform on $(0,1)$. For every $u\in(0,2)$,
\[
 \frac1{2u}-\left(1-\frac u2\right)=\frac{(u-1)^2}{2u}\geq0.
\]
Hence $E\geq1/(2\alpha U)$ implies $E\geq V/\alpha$ on every data-and-lottery realization. The ordinary-threshold Bell-Cover test has an even smaller rejection event under this coupling.
\end{proof}

The dominance survives the removal of the Bell-Cover test's terminal size conservativeness. It is therefore not just a comparison between a level-$\alpha/2$ test and a level-$\alpha$ test. Some representative conditional rejection probabilities are shown in \cref{tab:power}.

\begin{table}[htbp]
\centering
\caption{Conditional rejection probability given $z=\alpha E$. ``Calibrated'' refers to using the terminal threshold $1/(2\alpha)$ for $UE$.}
\label{tab:power}
\begin{tabular}{@{}ccccc@{}}
\toprule
$z$ & Ordinary & Bell-Cover & Bell-Cover, calibrated & Uniform threshold\\
\midrule
$1/4$ & $0$ & $0$ & $0$ & $1/4$\\
$1/2$ & $0$ & $0$ & $1/2$ & $1/2$\\
$3/4$ & $0$ & $1/3$ & $2/3$ & $3/4$\\
$1$   & $1$ & $1/2$ & $3/4$ & $1$\\
$2$   & $1$ & $3/4$ & $7/8$ & $1$\\
\bottomrule
\end{tabular}
\end{table}

There is no contradiction with competitive optimality. The threshold procedure is a randomized decision rule, not a fair initial-capital lottery. Writing it as $E/V\geq1/\alpha$ does not make $E/V$ an e-variable: if $\E_P[E]>0$, Tonelli and $\E[V^{-1}]=\infty$ give $\E_{P,V}[E/V]=\infty$. It is the rejection event, not a putative wealth process $E/V$, that is calibrated.

\subsection{A valid sequential use of terminal threshold randomization}

Let $(E_t)$ be an e-process. Monitor its original threshold $1/\alpha$, and let $\tau$ be a bounded data stopping time representing the end of the study. At that time, draw a fresh independent $V$ for a final randomized decision. The combined rule
\begin{equation}\label{eq:randomizedville}
 \left\{\exists t<\tau:E_t\geq1/\alpha\right\}
 \ \cup\ \{E_\tau\geq V/\alpha\}
\end{equation}
is level $\alpha$. Indeed, let $\gamma=\inf\{t:E_t\geq1/\alpha\}$ and $\sigma=\gamma\wedge\tau$. The event in \eqref{eq:randomizedville} equals $\{E_\sigma\geq V/\alpha\}$: if $\gamma<\tau$, then $E_\sigma\geq1/\alpha$ makes the latter rejection certain. Therefore its probability is
\[
 \E_P[\min\{\alpha E_\sigma,1\}]\leq\alpha\E_P[E_\sigma]\leq\alpha.
\]
The argument extends to almost surely finite study endpoints. This is the e-process form of the randomized Ville inequality in \citet[Section~4]{RamdasManole2026}.

By contrast, even a \emph{single} independent draw $V$ does not justify monitoring the lower threshold $V/\alpha$ forever. For a concrete example, let $E_0=1$ and let $E_1$ equal two or zero with equal null probabilities. For $\alpha\leq1/2$, monitoring the lowered threshold at both times rejects if $V\leq\alpha$, or if the coin is heads and $\alpha<V\leq2\alpha$. Its size is $\alpha+\alpha/2=3\alpha/2$. Fresh uniforms at repeated looks can be still worse. Terminal threshold randomization and independent initial-capital randomization therefore have fundamentally different optional-stopping behavior.

\section{Conclusion}\label{sec:conclusion}

Bell and Cover's randomization of the initial capital has an exact and useful interpretation in testing by betting. Against a specified $Q$, an independently randomized numeraire wins a pairwise comparison with every independently randomized e-variable at least half the time. For finite positive numeraires this is a strict-win statement; unrestricted extended-valued numeraires retain the corresponding half-tie minimax statement. The uniform lottery is an optimal response to a probability-of-outperformance objective, not a modification required for e-validity.

For sequential testing, an independent mean-one initial lottery preserves null validity under stopping rules that observe the lottery. Competitive optimality is more delicate: it requires a stopped-ratio bound and a comparison protocol that does not exploit that lottery. Conditional log-optimal betting and simple-null likelihood ratios provide important positive cases.
% ; arbitrary composite-null marginal numeraires do not automatically provide an e-process.

The costs are equally explicit. For finite positive numeraires, the initial lottery sacrifices $1-\log2$ nats of expected relative log evidence and abandons the numeraire property of the reported wealth. It does not confer (improvement or) competitive optimality on an arbitrary betting strategy. At the usual terminal threshold it has sharp size at most $\alpha/2$, while anytime monitoring can (in some cases) use the full $\alpha$. Even after a sharp terminal recalibration, it is dominated in conditional rejection probability by the uniformly randomized Markov test based on the same underlying e-variable. This latter procedure randomizes a decision threshold rather than fair betting capital and must be used with its own sequential safeguards.

Thus three objectives should remain separate. For a Bell--Cover head-to-head game, the uniform-capital lottery is exactly appropriate. For logarithmic evidence accumulation, the unrandomized numeraire is the benchmark. For power from a terminal e-variable, randomizing the testing threshold is the more favorable than the random initial wealth considered here. The distinctions are compatible with Bell and Cover's own caution about recommending their lottery in practice. 
% and with the genuine additional scope of the modern numeraire e-variable theory.

% The Appendix explores some implications for the setting where the alternative $\mathcal Q$ is composite, and the method of mixtures or the plug-in method are employed to achieve asymptotic log-optimality; here the story is much less simple, and the details are thus omitted from the main paper.

\paragraph{Acknowledgments.} The author acknowledges the use of GPT 6 in turning a set of initial notes into  an initial paper draft, whose writing and math was then significantly modified and made crisper. Several of the main results for e-variables were proved by the author, but GPT suggested interesting examples, and was also helpful in brainstorming additional results. The author has checked all details for correctness and relevance, and takes responsibility for all content.

\bibliographystyle{plainnat}
\bibliography{references}
\end{document}